\newtheorem{theorem}[equation]{Theorem}
\newtheorem{prop}[equation]{Proposition}
\newtheorem{lemma}[equation]{Lemma}
\newtheorem{remark}[equation]{Remark}
\numberwithin{equation}{section}
\renewcommand{\qed}{\hspace*{\fill} \setlength{\unitlength}{1mm}
\begin{picture}(2.5,2.5)
      \put(0,0){\framebox(2.5,2.5){}}
\end{picture}
\setlength{\unitlength}{1pt}}
\newcommand{\Tr}{\mathrm{Tr}\,}
\newcommand{\R}{\mathbb{R}}
\newcommand{\RR}{\mathbb{R}}
\newcommand{\calC}{\mathcal C}
\newcommand{\calG}{\mathcal G}
\newcommand{\del}{\partial}
\newcommand{\e}{\epsilon}
\newcommand{\calU}{\mathcal U}
\newcommand{\calO}{\mathcal O}
\newcommand{\calV}{\mathcal V}
\title{A heat trace anomaly on polygons}
\author{Rafe Mazzeo \thanks{Supported by NSF grant 
DMS-0805529}\\ Stanford University \and Julie Rowlett \thanks{Current address: Mathematisches Institut, University of Bonn}
\\ UC Santa Barbara}
\date{}
\begin{document}
\maketitle


\begin{abstract}
Let $\Omega_0$ be a polygon in $\RR^2$, or more generally a compact surface with piecewise smooth 
boundary and corners. Suppose that $\Omega_\e$ is a family of surfaces with $\calC^\infty$ boundary which 
converges to $\Omega_0$ smoothly away from the corners, and in a precise way at the vertices to be described in
the paper. Fedosov \cite{Fe}, Kac \cite{K} and McKean-Singer \cite{MS} recognized that certain 
heat trace coefficients, in particular the coefficient of $t^0$, are not continuous as $\e \searrow 0$. 
We describe this anomaly using renormalized heat invariants of an auxiliary smooth domain $Z$ which
models the corner formation. The result applies both for Dirichlet and Neumann conditions. We also include
a discussion of what one might expect in higher dimensions. 
\end{abstract}

\section{Introduction}
Let $\Omega \subset \RR^2$ be a domain with smooth boundary, or more generally, any two dimensional
compact Riemannian manifold with smooth boundary. The Laplace operator with Dirichlet 
boundary conditions has discrete spectrum $\{\lambda_i\}$ and corresponding eigenfunctions $\{\phi_i\}$. 
The fundamental solution for the Cauchy problem for the heat equation has Schwartz kernel
\[
H^\Omega(t,z,z') = \sum_{i=1}^\infty e^{-\lambda_i t}\phi_i(z)\phi_i(z');
\]
this converges in $\calC^\infty((0,\infty) \times \overline{\Omega} \times \overline{\Omega})$ and is
smooth up to $t=0$ away from the diagonal of $\Omega \times \Omega$. The so-called heat trace is the function
\begin{equation}
\Tr H^\Omega = \sum_{i=1}^\infty e^{-\lambda_i t} = \int_{\Omega} H^\Omega(t,z,z)\, dz;
\label{eq:heattrace}
\end{equation}
this has an asymptotic expansion as $t \searrow 0$ of the form
\begin{equation}
\mbox{Tr\,} H^\Omega \sim \sum_{j=0}^\infty a_j t^{-1 + \frac{j}{2}}.
\label{eq:heattrace2}
\end{equation}
Each coefficient $a_j$ is a sum of two terms: an integral over $\Omega$ of some universal polynomial 
in the Gauss curvature $K$ of the metric and its covariant derivatives, and an integral over 
$\del \Omega$ of another universal polynomial in the geodesic curvature $\kappa$ of the boundary and its 
derivatives. Precise formul\ae\ for these polynomials are extremely complicated (and mostly unknown) when
$j$ is large, but the first few are quite simple: 
\[
a_0 = \frac{1}{4\pi}\int_\Omega 1\, dA = \frac{1}{4\pi} |\Omega|,\qquad 
a_1 = -\frac{1}{8\sqrt{\pi}}\int_{\del \Omega} 1\, ds = -\frac{1}{8\sqrt{\pi}} |\del \Omega|
\]
and
\begin{equation} \label{eq:a2smooth}
a_2 = \frac{1}{12\pi}\left(\int_\Omega K\, dA + \int_{\del \Omega} \kappa\, ds\right) = \frac{1}{6}\chi(M).
\end{equation}
Here and elsewhere, $| \cdot |$ refers to either area of a domain or length of its boundary, as 
appropriate.

Almost all of this remains true if the boundary of $\Omega$ is piecewise smooth. More precisely,
assume that $\del \Omega$ is a finite union of smooth arcs, $\gamma_i$, $i = 1, \ldots, k$, where 
(counting indices mod $k$) $\gamma_i$ meets $\gamma_{i+1}$ at the vertex $p_i$ with an interior angle
$\alpha_i \in (0,2\pi)$. In fact, the only modification in the statements above is that the
heat trace coefficients may now include contributions from the vertices. The formul\ae\ for $a_0$
and $a_1$ are the same as before, but now  
\begin{equation}
a_2 = \frac{1}{12\pi}\left(\int_{\Omega} K\, dA + \sum_{j=1}^k \int_{\gamma_j} \kappa\, ds\right)
+ \sum_{j=1}^k \frac{\pi^2 - \alpha_j^2}{24\pi \alpha_j}.
\label{eq:a2}
\end{equation}
The term in parentheses on the right now equals $2\pi \chi(\Omega) - \sum_{j=1}^k(\pi-\alpha_j)$.
That the coefficient $a_2$ contains an extra contribution from the vertices was already known to
Fedosov \cite{Fe} (who was studying Riesz means of the eigenvalues on polyhedra of 
arbitrary dimension) and to Kac \cite{K}, although the precise simple expression here was obtained 
by Dan Ray (this is referenced by Kac and also later by Cheeger \cite{Ch}, but apparently Ray 
did not publish his result). A particularly transparent derivation of this corner term appears 
in a paper by van den Berg and Srisatkunarajah \cite{BS}.

The heat trace anomaly in the title of our paper is the discrepancy between the heat coefficients
in the smooth and polygonal settings. More specifically, it refers to the fact that at least one heat
invariant is not continuous with respect to Lipschitz convergence of domains. To phrase
this more precisely, let $\Omega_\e$ be a family of surfaces with {\it smooth} boundary which converge 
to a piecewise smoothly bounded domain $\Omega_0$ as $\e \to 0$. We think of $\Omega_\e$ as $\Omega_0$
with each corner `rounded out' slightly, but will give a precise formulation in the next paragraph. 
Denoting the heat trace coefficients for $\Omega_\e$ by $a_j(\e)$, it will be clear from this 
definition that
\begin{multline*}
\lim_{\e \to 0} a_2(\e) = \lim_{\e \to 0} \frac{1}{12 \pi}\left(\int_{\Omega_\e} K_\e\, dA_\e + 
\int_{\del \Omega_\e} \kappa_\e \, ds\right) \longrightarrow  \\
\frac{1}{12\pi} \left(\int_{\Omega_0} K_0\, dA_0 + \sum_{i=1}^k \int_{\gamma_i} \kappa_0 \, ds+ \sum_{i=1}^k (\pi-\alpha_i)\right),
\end{multline*}
where $K_\e$ and $\kappa_\e$ are the Gauss curvatures of $g_\e$ and the geodesic curvatures of $\del \Omega_\e$
for every $\e \geq 0$, respectively. The anomaly is simply that this formula does not agree with the expression 
(\ref{eq:a2}). The goal of this paper is to provide a simple explanation for the disagreement between these two 
expressions. 

We now explain the desingularization more precisely. For simplicity, suppose that $\Omega_0$ and $\Omega_\e$
all lie in some slightly larger ambient open surface $\widetilde{\Omega},$ and that the metrics $g_\e$ on $\Omega_\e$
are all extended to metrics (still denoted $g_\e$) on this larger domain. We assume that this family of
metrics converges smoothly on $\widetilde{\Omega}$. Let $p$ be a vertex of $\Omega_0$ and consider the portion 
of $\Omega_\e$ in some ball of fixed size around $p$, $B_c(p) \cap \Omega_\e$. Our main assumption is that the 
family of pointed spaces $(B_c(p) \cap \Omega_\e,\e^{-2}g_\e,p)$ converges in pointed Gromov-Hausdorff norm, and 
smoothly, to a noncompact region $Z \subset \RR^2$ with smooth boundary, such that at infinity, $\del Z$ is 
asymptotic to a cone with vertex at $0$ and with opening angle $\alpha$, the same angle as at the 
vertex $p$ in $(\Omega_0,g)$.  Note that this is actually pointed Gromov-Hausdorff convergence for the
ambient space $(\widetilde{\Omega},g_\e,p)$. 

Note that this definition implies that the distance between $p$ and $\del \Omega_\e$ is bounded above
by a constant times $\e$, and that $g_\e$ is a small perturbation, which decreases with $\e$, of the
rescaling of the standard flat metric on $Z \cap B_{c/\e}$. For convenience we assume in the rest of this
paper that the constant $c$ equals $1$. Thus the basic assumption is the existence of a smoothly
bounded asymptotically conic region $Z$ in the plane such that $\e^{-1}(\Omega_\e \cap B_1(p))$ 
converges to $Z$. 

This definition is a very special case of a more general desingularization construction explored carefully 
in \cite{R0} and \cite{R1} for the case of degeneration to spaces with isolated conic singularities, and
in greater generality in \cite{evian}. The goal in these first two papers, as here, is to analyze the 
behaviour of the heat kernel under this degeneration process. That analysis is quite involved, although 
it yields much sharper results than can be obtained by the present more naive methods. However, one 
motivation for the present paper is to show how some very simple rescaling arguments, which are only
slight generalizations of ones used (in substantially more sophisticated ways) by Cheeger \cite{Ch}, 
already yield some interesting results. 

Now consider the function
\begin{equation}
G(t,\e) = \Tr H^{\Omega_\e} = \int_{\Omega_\e} H^{\Omega_\e}(t,z,z)\, dz,
\label{de:F}
\end{equation}
which is smooth on the interior of the quadrant $Q = \{t \geq 0, \e_0 > \e\geq 0\}$; our main 
theorem concerns its precise regularity at the corner $t=\e = 0$. This will be decribed in terms of
its regularity on the parabolic blowup of $Q$ which we denote $Q_0$. This space is diffeomorphic
to $Q$ away from the origin, but has an extra `front face' $F$ replacing the point $(0,0)$ which
encodes all the directions of approach to this point along parabolic trajectories. It is described
more carefully in \S 2 below. One of the goals of this paper, in fact, is to advertise the utility 
and naturality of this blowup construction. 

\begin{theorem}
Let $(\Omega_\e,g_\e)$ be a family of smooth surfaces with Riemannian metrics which converge in the 
manner described above to a surface with piecewise smooth boundary $(\Omega_0,g_0)$. Then the 
function $G(t,\e)$ lifts to $Q_0$ to be polyhomogeneous conormal at all boundary faces and corners. 
\label{th:blowup}
\end{theorem}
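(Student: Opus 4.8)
The plan is to establish polyhomogeneity by carefully analyzing the two distinct regions of the heat trace integral—the region near the corner, where rescaling is essential, and the region away from the corner, where standard smooth heat trace asymptotics apply—and then combining these on the blown-up space $Q_0$. The guiding principle is that the parabolic blowup $Q_0$ is precisely the geometric object on which the two competing scales, the heat time $\sqrt{t}$ and the desingularization parameter $\e$, are resolved into independent variables. On the front face $F$, the relevant variable is the ratio $\sqrt{t}/\e$ (or its reciprocal), and the content of the theorem is that $G$ has a classical asymptotic expansion in each of the boundary hypersurfaces of $Q_0$, with expansions that match up consistently at the corners where these faces meet.

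\emph{First I would} decompose the trace integral $\int_{\Omega_\e} H^{\Omega_\e}(t,z,z)\,dz$ using a partition of unity that separates a neighborhood of each vertex $p$ from the bulk of $\Omega_\e$. Away from the corners, the boundary is uniformly smooth as $\e \to 0$, the metrics $g_\e$ converge smoothly, and the interior/boundary heat trace contributions have the standard polyhomogeneous (indeed smooth in $\sqrt{t}$ and $\e$) expansion given by the local heat invariants $a_j(\e)$; these lift trivially to $Q_0$ because they involve only integer and half-integer powers of $t$ with coefficients smooth in $\e$, hence are smooth across the interior of the faces $\{t=0\}$ and $\{\e=0\}$ and conormal at their intersection. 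The subtle contribution is from the neighborhood $B_1(p) \cap \Omega_\e$ of each vertex.

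\emph{The key step} is the rescaling near the vertex. Using the defining assumption that $\e^{-1}(\Omega_\e \cap B_1(p))$ converges smoothly and in pointed Gromov–Hausdorff norm to the model region $Z$, I would change variables $z = \e w$, so that $H^{\Omega_\e}(t, \e w, \e w)$ is governed, to leading order, by the model heat kernel $H^Z(t/\e^2, w, w)$ on $Z$ with its conic end. Introducing the natural parabolic variable $\tau = t/\e^2$, the rescaled trace integral over the vertex region becomes an integral of $H^Z(\tau, w, w)$ over an expanding region $Z \cap B_{1/\e}$, times the Jacobian factor $\e^2\,dw$. The behavior of this integral as a function of $(\tau, \e)$—equivalently as a function on the front face $F$ parametrized by $\tau \in [0,\infty]$—is exactly what must be shown to be polyhomogeneous. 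This requires two ingredients: the small-time (large-$\tau^{-1}$, i.e.\ $\sqrt{\tau}\to 0$) expansion of the model trace $\int_Z H^Z(\tau,w,w)\,dw$, which reproduces the smooth interior/edge invariants of $Z$; and the long-time ($\tau\to\infty$) behavior, where the conic structure at infinity dictates a polyhomogeneous expansion in $\sqrt{\tau}$ whose coefficients encode the opening angle $\alpha$. The renormalization—subtracting the divergent area and perimeter contributions of the expanding domain $Z \cap B_{1/\e}$ against the corresponding smooth contributions already accounted for in the bulk—is what produces a finite, well-defined polyhomogeneous function on $F$ and, in the limit $\tau \to \infty$, recovers the discrete corner term $(\pi^2-\alpha^2)/(24\pi\alpha)$ of \eqref{eq:a2}.

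\emph{The hard part will be} controlling the uniformity of the model heat kernel $H^Z$ on the noncompact asymptotically conic region $Z$, and in particular establishing the precise polyhomogeneous structure of its trace as $\tau \to \infty$ together with the matching of expansions at the corner of $F$ with the adjacent faces. One must show that the error between the true heat kernel $H^{\Omega_\e}$ and the rescaled model $H^Z$ is itself polyhomogeneous of positive order in the blown-up variables, so that it does not disturb the leading structure; this is where the smooth convergence hypothesis and a parametrix construction adapted to $Q_0$ enter. Concretely, I would build a parametrix for $H^{\Omega_\e}$ by gluing the model heat kernel on $Z$ near the vertex to the smooth heat kernel in the bulk, estimate the error via Duhamel, and verify that the error term lifts to a polyhomogeneous conormal function on $Q_0$ vanishing to the appropriate order on $F$. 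Assembling the renormalized model contribution, the bulk contribution, and the controlled error then yields the claimed polyhomogeneity of $G$ at all faces and corners of $Q_0$.
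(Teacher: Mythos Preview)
Your overall architecture matches the paper's: decompose into a bulk region (where the heat trace is polyhomogeneous by standard smooth theory) and a vertex neighborhood, then rescale the latter by $z = \e w$ to reduce to the truncated model trace $\int_{|w| \le 1/\e} H^Z(\tau, w, w)\,dw$. However, you have misidentified where the real work lies. The comparison between $H^{\Omega_\e}$ and the rescaled model $H^Z$ is \emph{not} the hard part; the paper dispatches it immediately via the locality of short-time heat asymptotics (Lemma~\ref{le:decomp1} with sharp characteristic-function cutoffs gives an error that is $\calO(t^\infty)$ uniformly in $\e$), so no Duhamel iteration or delicate parametrix for $H^{\Omega_\e}$ is needed there.

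The actual content is the polyhomogeneity of the truncated model trace itself as a function of $(\tau,\e)$, especially near the corner $F \cap R$ where $\tau \to \infty$. Your proposal does not supply a mechanism for this beyond asserting that ``the conic structure at infinity dictates'' an expansion. The paper's key idea, which your outline is missing, is a \emph{second} parametrix decomposition applied to $H^Z$: write $H^Z = \chi_1 H^W + \chi_2 H^S + K_1$, where $W$ is a compact smoothly bounded region agreeing with $Z$ near the origin and $S = S_\alpha$ is the exact infinite sector. The compact piece $H^W$ is standard; the sector piece reduces by dilation to the function $D(R) = \int_{|w|\le R} H^S(1,w,w)\,dw$, and its polyhomogeneity in $1/R$ is read off directly from the explicit formula of van den Berg and Srisatkunarajah, equation~\eqref{eq:BS}. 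For large $\tau$ the paper supplements this with a resolvent parametrix for $R_Z(\lambda)$ built from $R_W$ and $R_S$, again reducing the unknown behavior of $Z$ to the known behavior of the sector. Without this second-level decomposition and the explicit sector input, the step you correctly flag as ``the hard part'' remains unaddressed.
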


Recall that polyhomogeneity means simply that the lift of $G$ has asymptotic expansions at all
boundary faces and product type expansions at all corners. The existence of such expansions 
somehow normalizes our problem. Indeed, the heat trace anomaly is simply the fact that the
limit as $\e \searrow 0$ of the second asymptotic coefficient $a_2(\e)$ in the expansion
as $t \searrow 0$ is not the same as the second asymptotic coefficient of the heat expansion
for $\Omega_0$. The front face $F$ of $Q_0$ separates where these limits are taken (first $t \to 0$ then
$\e \to 0$ vs.\ the other way around), and this extra space allows for the existence of a
function which interpolates between these two values. Our second main result describes this function.

\begin{theorem}
There is a function $C_2(\tau)$ defined along the front face of $Q_0$, which is smooth in the 
rescaled time variable $\tau = t/\e^2$, and satisfies
\begin{multline*}
\lim_{\tau \searrow 0} C_2(\tau) = \frac{\chi(\Omega_0)}{6}, \qquad \mbox{and} \\
\lim_{\tau \nearrow \infty} C_2(\tau) = \frac{\chi(\Omega_0)}{6} + 
\sum_{j=1}^k \frac{\pi^2 - \alpha_j^2}{24\pi \alpha_j} - \frac{1}{12\pi} \sum_{j=1}^k (\pi-\alpha_j).
\end{multline*}
Its explicit form includes the finite part of a divergent expansion:
\[
C_2(\tau) = \frac{\chi(\Omega_0)}{6} + 
\sum_{j=1} ^k \underset{{\e=0}}{\mathrm{f.p.}} \int_{\{z \in Z_j: |z|<1/\e\}} H^{Z_j}(\tau,z,z)\, dz - \frac{1}{12 \pi}
\sum_{j=1}^k (\pi-\alpha_j),
\]
where $Z_j$ is a noncompact region in the plane which models the collapse at the $j^{th}$ corner.  
\label{th:fp}
\end{theorem}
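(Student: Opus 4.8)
The plan is to understand $C_2(\tau)$ as the coefficient of $t^0$ (equivalently $\e^2 \cdot t^{-1+2/2}$ after rescaling) in the restriction of the lifted function $G(t,\e)$ to the front face $F$ of $Q_0$, and to identify this coefficient explicitly. Since Theorem \ref{th:blowup} already guarantees that $G$ is polyhomogeneous on $Q_0$, the function $C_2(\tau)$ is well-defined as a term in the expansion along $F$; the task is to compute it. The natural first step is a decomposition of the heat trace integral $\int_{\Omega_\e} H^{\Omega_\e}(t,z,z)\,dz$ into a contribution from the region away from the corners and a sum of contributions from neighborhoods $B_1(p_j)$ of each vertex. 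Away from the corners, $\Omega_\e$ converges smoothly to $\Omega_0$, so by the standard interior and boundary heat expansions (the formul\ae\ for $a_0,a_1$ and the smooth $a_2$) this part contributes, in the limit, the smooth Euler characteristic term $\chi(\Omega_0)/6$ together with the $t^{-1},t^{-1/2}$ terms which do not enter $C_2$. The boundary geodesic-curvature integral near each rounded corner is what produces the correction $-\frac{1}{12\pi}\sum_j(\pi-\alpha_j)$: as $\e\to 0$ the boundary of $\Omega_\e$ near $p_j$ turns through total angle $\pi - \alpha_j$, and this localized curvature integral must be separated off and subtracted.

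The heart of the matter is the corner contribution. Here I would use the defining rescaling hypothesis: on $B_1(p_j)\cap\Omega_\e$, the rescaled space $\e^{-1}(\Omega_\e\cap B_1(p_j))$ converges smoothly to the model region $Z_j\subset\RR^2$. Under this rescaling $z\mapsto \e^{-1}z$ and the parabolic scaling $t = \e^2\tau$, the heat kernel transforms by its scale-covariance, $H^{\Omega_\e}(\e^2\tau, z, z) = \e^{-2} H^{\e^{-1}\Omega_\e}(\tau, \e^{-1}z, \e^{-1}z)$, so that the Jacobian $dz = \e^2\, d(\e^{-1}z)$ exactly cancels the $\e^{-2}$ and the corner integral becomes $\int_{\{w\in \e^{-1}(\Omega_\e\cap B_1(p_j))\}} H^{\e^{-1}\Omega_\e}(\tau, w, w)\, dw$. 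The key point is that, on the front face $F$, the variable $\tau = t/\e^2$ is the natural coordinate, and in the limit $\e\to 0$ the domain of integration becomes $\{w\in Z_j : |w| < 1/\e\}$ and the kernel becomes $H^{Z_j}(\tau,w,w)$. I would justify interchanging the limit $\e\to 0$ with the integral using the smooth (and Gromov-Hausdorff) convergence of the rescaled domains together with off-diagonal decay of the heat kernel, which controls the tail of the integral uniformly.

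This rescaled corner integral $\int_{\{|w|<1/\e\}} H^{Z_j}(\tau,w,w)\,dw$ diverges logarithmically or polynomially as $\e\to 0$ because $Z_j$ is noncompact and asymptotically conic; its leading divergences are exactly the area, perimeter and (smooth) local curvature terms of the model cone, which match the corresponding terms already accounted for in the smooth part of the expansion. Therefore the genuinely new, finite contribution is the renormalized quantity $\operatorname{f.p.}_{\e=0}\int_{\{|w|<1/\e\}} H^{Z_j}(\tau,w,w)\,dw$, the finite part obtained by subtracting off these known divergent terms. Assembling the three pieces — the smooth bulk term $\chi(\Omega_0)/6$, the finite parts of the corner integrals, and the subtracted boundary-curvature correction $-\frac{1}{12\pi}\sum_j(\pi-\alpha_j)$ — yields the stated formula for $C_2(\tau)$. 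The two limiting values then follow by analyzing the finite part as $\tau\searrow 0$ and $\tau\nearrow\infty$: as $\tau\to 0$ the model heat kernel localizes and its renormalized integral vanishes, leaving $\chi(\Omega_0)/6 - \frac{1}{12\pi}\sum_j(\pi-\alpha_j) + \frac{1}{12\pi}\sum_j(\pi-\alpha_j)$ once the curvature correction is reconciled, giving $\chi(\Omega_0)/6$; while as $\tau\to\infty$ the finite part converges to the exact corner heat coefficient $\sum_j \frac{\pi^2-\alpha_j^2}{24\pi\alpha_j}$, recovering the polygonal value.

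The main obstacle I expect is the rigorous justification of the finite-part renormalization and the uniform interchange of limit and integral. One must show that the divergent terms subtracted in defining $\operatorname{f.p.}$ are precisely those captured by the smooth part of the expansion, so that no contribution is double-counted and none is dropped. This requires matching the asymptotic-conic structure of $H^{Z_j}$ at infinity against the local heat expansion of $\Omega_\e$ near $p_j$ with uniform error control in both $\e$ and $\tau$ — in effect verifying consistency of the two asymptotic regimes that meet along the front face $F$. The $\tau\nearrow\infty$ limit is the more delicate of the two, since it requires identifying the long-time renormalized trace of the model heat kernel on the infinite asymptotically conic region $Z_j$ with the exact sector corner coefficient $\frac{\pi^2-\alpha_j^2}{24\pi\alpha_j}$, which I would establish by comparison with the explicitly known heat kernel on an exact infinite wedge of angle $\alpha_j$ (as in \cite{BS}) and a decay estimate for the difference.
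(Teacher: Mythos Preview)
Your approach is essentially the paper's: decompose the heat trace into a piece on $\Omega'$ away from the corners and a corner piece, rescale the latter via $H^{\e Z}(t,z,z)=\e^{-2}H^{Z}(t/\e^2,z/\e,z/\e)$ to obtain $\int_{|w|<1/\e}H^{Z_j}(\tau,w,w)\,dw$, extract the finite part at $\e=0$, and identify the small- and large-$\tau$ limits by the local heat expansion on $Z_j$ and by comparison with the exact sector via the van~den~Berg--Srisatkunarajah formula, respectively.

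One point needs correcting. As $\tau\searrow 0$ the renormalized corner integral does \emph{not} vanish: the small-$\tau$ expansion of $\int_{|w|<1/\e}H^{Z_j}(\tau,w,w)\,dw$ has $\tau^0$-coefficient $\frac{1}{12\pi}\int_{\del Z_j}\kappa\,ds$, and since $\del Z_j$ is a smooth curve asymptotic to the two rays of the sector, its total turning is exactly $\pi-\alpha_j$. Hence
\[
\underset{\e=0}{\mathrm{f.p.}}\int_{|w|<1/\e}H^{Z_j}(\tau,w,w)\,dw \longrightarrow \frac{\pi-\alpha_j}{12\pi}\quad(\tau\to 0),
\]
and it is this value that cancels the explicit $-\frac{1}{12\pi}(\pi-\alpha_j)$ already sitting in your formula for $C_2(\tau)$, yielding $\chi(\Omega_0)/6$. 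There is no separate ``curvature correction to be reconciled''; the mechanism is simply that the $a_2$-term of the model $Z_j$ equals the subtracted boundary term. A minor slip: the divergence of the truncated corner integral as $\e\to 0$ is purely polynomial (area $\sim\e^{-2}$, perimeter $\sim\e^{-1}$) in two dimensions, never logarithmic.
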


\begin{remark} When $\Omega_0$ is a triangle (or indeed, any polygon in the plane), 
the first and third terms in the formula for $\lim_{\tau \to \infty} C_2(\tau)$
cancel, and we obtain Ray's original formula
\[
\lim_{\tau \to \infty} C_2(\tau) = a_2(0) = \sum_{j=1}^k \frac{\pi^2 - \alpha_j^2}{24 \pi \alpha_j}.
\]
\end{remark}

This interpolating function $C_2(\tau)$ therefore `explains' the heat trace anomaly, or alternately, 
the anomaly is caused by the renormalized heat trace on the complete space $Z_j$. We also discuss some of
the other coefficients in the asymptotic expansions for the lift of $G$ at the various boundary faces 
and corners of $Q_0$. 

Finally, we note that the behaviour of spectral quantities under `self-similar smoothing of corners'
in two-dimensional domains has been considered elsewhere. In particular, Dauge, Tordeux and Vial \cite{DTV}
have carried out an extensive analysis of the asymptotic behaviour of solutions of $\Delta u = f$
on such a family of domains. 

This paper is organized as follows.  In \S 2, we recall some preliminary facts about parabolic blowups and
scaling properties of heat kernels and the standard parametrix construction for heat kernels. The proofs
of the two theorems are then presented in \S 3.  In \S 4 we indicate the minor modifications needed to
prove the analogous result for Neumann boundary conditions; the statement of the main theorem in
that setting will be given there. Finally, in \S 5 we include, at the referee's
request, a brief discussion of what can be done in higher dimensions. The results there are less explicit,
but the proofs carry over fairly directly. 

The authors wish to thank Lennie Friedlander for bringing this problem to their attention, and for making some
very useful remarks on the first version of this manuscript; the first
author is also grateful to Gilles Carron and Andrew Hassell for some helpful comments.  

\section{Preliminaries}
In this section we collect the requisite facts and tools: the behaviour of the heat kernel under scaling 
of the underlying space, a review of parabolic blowups and polyhomogeneity, and a slight modification of the 
standard parametrix construction for heat kernels. 

\subsection{Heat kernels and dilations}
The heat kernel transforms naturally under dilations of the domain, or equivalently, of the metric.  Let
$(M,g)$ be any complete Riemannian manifold with smooth (or piecewise smooth) boundary, and denote by 
$H^M(t,z,z')$ the minimal heat kernel for the Laplacian with Dirichlet boundary conditions on $M$. 
This is a smooth function on the interior of $\RR^+ \times M \times M$ with well-known regularity 
properties at the various boundaries and corners. 

We seek to relate this heat kernel with the one for the same manifold $M$ but with rescaled metric 
$g_\lambda = \lambda^2 g$, $\lambda \in \RR^+$. This will be applied when $M \subset \RR^2,$ $g$ is 
the induced Euclidean metric, and we relate its heat kernel to the one for $\lambda M$, the image of 
$M$ under the dilation $D_\lambda:  \RR^2 \to \RR^2$, $z \mapsto \lambda z$. The pullback of the Euclidean
metric from $\lambda M$ to $M$ is simply $\lambda^2\, g$. 

\begin{prop}
The heat kernels on $M$ and $\lambda M$ are related by the formula
\[
H^{\lambda M}(\lambda^2 t, \lambda z, \lambda z') \lambda^{2} = H^M(t,z,z').
\]
\label{pr:dilate}
\end{prop}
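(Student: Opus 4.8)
The plan is to verify the scaling formula directly from the defining property of the heat kernel: it is the unique (minimal) fundamental solution of the heat equation $(\partial_t + \Delta_g)H = 0$ satisfying the given boundary conditions and the initial condition $H(t,z,z') \to \delta_{z'}(z)$ as $t \searrow 0$. My strategy is to define a candidate kernel on $(M,g)$ by rescaling the known kernel on $\lambda M$ and then check that it satisfies all the defining properties; uniqueness of the minimal heat kernel then forces the two to coincide.

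First I would record how the Laplacian transforms under the metric dilation. If $g_\lambda = \lambda^2 g$, then the Laplace--Beltrami operator satisfies $\Delta_{g_\lambda} = \lambda^{-2}\Delta_g$, since the inverse metric scales by $\lambda^{-2}$ while the volume-density factors in the divergence-form expression cancel. Equivalently, under the dilation $D_\lambda : \RR^2 \to \RR^2$, $z \mapsto \lambda z$, pulling back the Euclidean Laplacian on $\lambda M$ produces $\lambda^{-2}$ times the Euclidean Laplacian on $M$. This is the one computation that makes the whole formula work, so I would state it cleanly at the outset.

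Next I would define $K(t,z,z') := \lambda^{2}\, H^{\lambda M}(\lambda^2 t,\lambda z,\lambda z')$ and verify the three characterizing properties. For the heat equation: applying $\partial_t$ brings down a factor $\lambda^2$ from the time-rescaling, while applying $\Delta_g$ to the spatial argument $\lambda z$ produces $\lambda^2 \Delta_{\lambda M}$ acting on $H^{\lambda M}$ at the point $\lambda z$ (by the chain rule / the Laplacian scaling above); since $H^{\lambda M}$ solves its own heat equation on $\lambda M$, these combine to give $(\partial_t + \Delta_g)K = 0$. The Dirichlet boundary condition is preserved because $z \in \partial M \iff \lambda z \in \partial(\lambda M)$. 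For the initial condition, the crucial point is the factor $\lambda^{2} = \lambda^{\dim M}$: under the change of variables $w = \lambda z$ the volume element transforms as $dV_{\lambda M}(w) = \lambda^{2}\, dV_M(z)$, so the extra $\lambda^{2}$ in the definition of $K$ is exactly what makes $K(t,z,z')$ integrate correctly against test functions and converge to $\delta_{z'}$ as $t \searrow 0$. I would make this precise by testing $\int K(t,z,z')\,f(z')\,dV_g(z')$ against a continuous $f$ and passing to the limit.

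The main obstacle, to the extent there is one, is not any single estimate but ensuring that $K$ is genuinely the \emph{minimal} heat kernel rather than merely \emph{a} fundamental solution --- this matters precisely because $M$ is allowed to be noncompact (it will be applied to the model region $Z$) and may have corners, so uniqueness is not automatic. I would handle this by appealing to the standard characterization of the minimal heat kernel as the monotone limit of Dirichlet heat kernels on an exhaustion by smooth bounded subdomains; since the dilation $D_\lambda$ carries an exhaustion of $M$ to an exhaustion of $\lambda M$ and the scaling identity holds at each finite stage by the compact case, it persists in the limit. With that established, uniqueness of the minimal kernel gives $K = H^M$, which is exactly the claimed identity after rearranging the factor of $\lambda^{2}$.
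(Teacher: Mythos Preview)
Your proposal is correct and follows essentially the same approach as the paper: both verify that the rescaled kernel satisfies the heat equation (via the homogeneity of $\partial_t - \Delta$ under parabolic dilation) and that the extra factor $\lambda^2$ is the Jacobian needed for the initial delta condition. Your treatment is more thorough --- you explicitly check the boundary condition and, more notably, address why the rescaled kernel is the \emph{minimal} heat kernel via exhaustion by bounded subdomains --- whereas the paper's proof is a two-sentence sketch that leaves minimality implicit; this extra care is appropriate given the later application to the noncompact model region $Z$.
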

Implicit in this formula, we are parametrizing points in $\lambda M$ with points 
in $M$ via $D_\lambda$. To prove this, observe that the heat operator $\del_t - \Delta_z$
on $M$ transforms homogeneously with respect to the parabolic dilation $(t,z) \mapsto (\lambda^2 t, 
\lambda z)$. Hence, the expression on the left satisfies the heat equation; the additional $\lambda^{2}$
is the Jacobian factor accounting for the fact that $H^{\lambda M}(0,w,w') = \delta(w-w')$ 
is homogeneous of order $-2$ in two dimensions. 

\subsection{Parabolic blowup}
The parabolic dilation $D_\lambda(t,\e) = (\lambda^2 t, \lambda \e)$ motivates the introduction of 
parabolic blowup $Q_0$ of the quadrant $Q := [0,\infty)_t \times [0,\e_0)_\e$ at $(0,0)$. This space is 
defined as follows. As a set, $Q_0$ is the disjoint union of $Q \setminus \{(0,0)\}$ and the orbit 
space $F = \left(Q \setminus \{(0,0)\}\right)/\sim$, where $(t,\e) \sim (t',\e')$ if $(t',\e') = D_\lambda (t,\e)$ 
for some $\lambda > 0$. More concretely, $F$ is diffeomorphic to a closed quarter-circle; it is also 
identified with the set of all equivalence classes of parametrized curves $\gamma(s) = (t(s), \e(s))$ with 
$\lim_{s\searrow 0} \gamma(s) = (0,0)$, $\e(s) = \calO(t(s)^2)$, and where
\[
\gamma \sim \tilde{\gamma} \Longleftrightarrow \lim_{s \to 0} 
\frac{\e(s)^2}{t(s)}\bigg/ \frac{\tilde{\e}(s)^2}{\tilde{t}(s)} = 1.
\]
The curves $t = \tau \e^2$ (parametrized by $s \mapsto (\tau s^2, s)$), $\tau \geq 0$, provide 
representatives of each equivalence class except the one represented by the $t$ axis. There is a 
unique minimal $\calC^\infty$ structure on $Q_0$ for which the lifts of smooth functions from $Q$ 
and the parabolic polar coordinates $r = \sqrt{t + \e^2}$, $t/r^2$ and $\e^2/r^2$ are all smooth.
We label the faces of $Q_0$ as follows: $F$ is the new front face, and $L$ and $R$ are the left 
and right side faces (the lifts of $t=0$ and $\e=0$, respectively).   There is a smooth `blowdown'
map $\beta: Q_0 \longrightarrow Q$ defined in the obvious way.

It is usually more convenient to use projective rather than polar coordinates. There are two such
systems,
\[
(\tau,\e),\quad \tau = t/\e^2, \qquad \qquad \mbox{and}\quad (t,\eta),\quad \eta = \e/\sqrt{t},
\]
which are valid away from $R$ and $L$, respectively. Thus, for example, $\tau$ is an
`angular' variable which vanishes on $L$, and in this coordinate system $F = \{\e = 0\}$. 
 
Parabolic blowups are described in detail and much greater generality in \cite{tapsit}. 


\subsection{Polyhomogeneous conormal functions}
Let $M$ be a manifold with corners. A class of functions which is the natural replacement for (or at
least just as good as) the class of smooth functions is the class of polyhomogeneous conormal functions.
We refer to \cite{edge} for a detailed exposition, but review a few facts about these here. 

First recall the space $\calV_b$ of all smooth vector fields on $M$ which are tangent to all boundaries of $M$.
If $H_1, \ldots, H_k$ are boundary hypersurfaces of $M$ meeting at a corner of codimension $k$, with boundary 
defining functions $x_1, \ldots, x_k$, respectively, and local coordinates $y = (y_1, \ldots, y_{n-k})$ on the
corner, then  $\calV_b$ is spanned over $\calC^\infty(M)$ locally near this corner by 
$\{x_1\del_{x_1}, \ldots, x_k\del_{x_k}, \del_{y_1}, \ldots, \del_{y_{n-k}}\}$.  

A function (or distribution) $u$ is said to be conormal if it has stable regularity with respect to $\calV_b$. In 
other words, there exists a $k$-tuple of real numbers $\mu_1, \ldots, \mu_k$ so that
\[
V_1 \ldots V_\ell \, u \in x_1^{\mu_1}\ldots x_k^{\mu_k} 
L^\infty(M), \qquad \forall\, \ell\quad \mbox{and}\ \forall\, V_j \in \calV_b.
\]
(In particular, the $\mu_i$ are independent of $\ell$ and the $V_j$.)  Examples include monomials
$x_1^{s_1}\ldots x_k^{s_k}$ for  $s_j \in {\mathbb C}$, as well as products of arbitrary powers of
$|\log x_j|$. (This definition is slightly inaccurate since it omits the distributions supported at
the boundary, i.e.\ delta sections and their derivatives, which are also conormal, but suffices here.)
The special subclass with which we are interested consists of the functions with 
asymptotic expansions in terms of powers of the boundary defining functions and nonnnegative integer 
powers of the logs of these defining functions, with coefficients which
are smooth in all other variables. The expansions are formalized using the notion of an index set $I$.
This consists of a countable sequence of pairs $(\alpha,N) \in {\mathbb C} \times  \left\{ {\mathbb N} \cup \{0\} \right\}$ such that
for each $A \in \RR$, $\mbox{Re}\, \alpha > A$ for all but a finite number of these pairs.  Now, the conormal 
function $u$ has a polyhomogeneous expansion near a corner of codimension $k$ if there are $k$ index 
sets $I_1, \ldots, I_k$ so that
\[
u \sim \sum_{(\alpha_j,N_j) \in I_j} \sum_{\ell_j \leq N_j}  x_1^{\alpha_1}(\log x_1)^{\ell_1}
\ldots x_k^{\alpha_k}(\log x_k)^{\ell_k} a_{\alpha,\ell}(y),
\]
where each coefficient function $a_{\alpha,\ell}$ is $\calC^\infty$.  Note that since $u$ is
already assumed to be conormal, this expansion may be differentiated. 

The polyhomogeneous functions on $Q$ and $Q_0$ with which we shall be concerned are quite simple.
None of them will have log terms in their expansions, and the exponents are (not necessarily 
nonnegative) integers.  Thus, for example, near $L$ a polyhomogeneous function $u$ will have expansion 
in powers of $t$ with coefficients smooth in $\e$; near $F$ in terms of either of the projective 
coordinate systems, it has an expansion in powers of $\e$ with coefficients smooth in 
$\tau$, or equivalently, in powers of $t$ with coefficients smooth in $\eta$; near the corner
$L \cap F$ it will have an expansion in powers of $\tau$ and $\e$, with coefficients now simply numbers. 

The final point to describe here is that if $u$ is polyhomogeneous conormal on $Q$, then its lift
$\beta^* u$ to $Q_0$ is also polyhomogeneous conormal, and 
\[
u \sim \sum a_{jk} t^j \e^k \Longrightarrow \beta^* u \sim \sum a_{jk} (\tau \e^2)^j \e^k = \sum a_{jk} \tau^j \e^{2j+k}.
\]
On the other hand, if $w$ is polyhomogeneous on $Q_0$, then its pushforward to $Q$ is always conormal,
but rarely polyhomogeneous.

\subsection{Parametrix construction}
We conclude this section by reviewing a parametrix construction for the heat kernel, which is useful 
because it accurately captures the asymptotics of the true heat kernel as $t \searrow 0$. 
The construction here is slightly nonstandard, but is well suited for our calculations below. 

Let $M$ be a complete Riemannian manifold, possibly with boundary, and suppose that $M = M_1 \cup M_2$ 
where $M_1$ and $M_2$ are two manifolds with boundary with $M_1 \cap M_2 = \Sigma$ a hypersurface.  If
$M$ has boundary, assume that $\Sigma$ intersects $\del M$ transversely, and $M_1$ and $M_2$ are manifolds 
with corners of codimension two. Suppose further that $M_j$ lies in a slightly larger complete manifold 
$M_j'$, again possibly with boundary, such that for some neighbourhood $\calU$ of $\Sigma$, $M_j' \cap 
\calU = M \cap \calU$. 

Taking the heat kernels on each $M_j'$ as given, define
\[
\tilde{H}^M(t,z,z') = \sum_{j=1}^2 \chi_j(z) H^{M_j'}(t,z,z') \chi_j(z'),
\]
where $\chi_j$ is the characteristic function of $M_j$ in $M$.  In the more customary parametrix
construction, the $M_j$ are relatively open in $M,$ and $M_1 \cap M_2$ is also open; the $H^{M_j'}$
are pasted together using cutoff functions $\{\psi_j\}$ and $\{\tilde{\psi}_j\}$ with $\psi_1 + 
\psi_2 = 1$, where $\mbox{supp}\,\psi_j \subset \{\tilde{\psi}_j=1\}$, and $\mbox{supp}\,\tilde{\psi}_j
\subset M_j'$. We are using sharp (discontinuous) cutoffs rather than smooth ones, however, so that
we can identify certain asymptotic coefficients in the calculations to follow. 

\begin{lemma}
Let $H^M(t,z,z')$ denote the true heat kernel on $M$, and set 
\[
K(t,z) = \tilde{H}^M(t,z,z) - H^M(t,z,z).
\]
Then $K(t,z) = \calO(t^\infty)$ as $t \searrow 0$. 
\label{le:decomp1}
\end{lemma}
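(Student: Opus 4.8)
The plan is to exploit the finite propagation speed (or, in parabolic language, the exponential off-diagonal decay) of the heat kernel to show that the difference $K(t,z)$ is supported, up to errors of order $t^\infty$, in an arbitrarily small neighborhood of $\Sigma$, and that on that neighborhood the two heat kernels being compared agree to infinite order. Recall that for any complete Riemannian manifold the heat kernel satisfies a Gaussian bound of the form $H(t,z,z') \leq C t^{-n/2} \exp(-\dist(z,z')^2/(4t))$, together with analogous bounds for all derivatives; the same holds for the difference of two heat kernels that agree near a common boundary piece. The essential mechanism is that heat does not travel far in short time, so whether the kernel "sees" the part of $M$ on the far side of $\Sigma$ is invisible to leading order when $z$ is bounded away from $\Sigma$.

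First I would fix a point $z \in M$ and distinguish two regimes according to the distance $d = \dist(z,\Sigma)$. Suppose $z$ lies in $M_1$ at distance $d > 0$ from $\Sigma$. Then $\chi_1(z) = 1$ and $\chi_2(z) = 0$, so $\tilde H^M(t,z,z) = H^{M_1'}(t,z,z)$, and we must compare this with the true kernel $H^M(t,z,z)$. Both $M$ and $M_1'$ contain the same metric ball $B_d(z)$ (since they coincide in the neighborhood $\calU$ of $\Sigma$ and away from $\Sigma$ the relevant piece is common to both). The key step is a standard \emph{locality} or \emph{interior parametrix} estimate: the on-diagonal heat kernel of a complete manifold at $z$ is determined, modulo $\calO(t^\infty)$, by the geometry in any fixed ball $B_\rho(z)$. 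Concretely, if two complete manifolds share an isometric ball $B_\rho(z)$, their on-diagonal heat kernels at $z$ differ by a term bounded by $C_N t^N \exp(-\rho^2/(8t))$ for every $N$; this follows from Duhamel's principle applied to a cutoff supported in the shared ball, using the Gaussian bound to control the error term generated where the cutoff's gradient is nonzero, namely at distance $\geq \rho/2$ from $z$. Hence $H^{M_1'}(t,z,z) - H^M(t,z,z) = \calO(t^\infty)$ uniformly on the region $\{d \geq \delta\}$ for any fixed $\delta > 0$.

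The remaining regime is the collar where $d$ is small, say $d < \delta$; this is the only place the sharp cutoffs $\chi_j$ could cause trouble, since $\tilde H^M$ there is $\chi_1 H^{M_1'} + \chi_2 H^{M_2'}$ but evaluated \emph{on the diagonal}, where $\chi_1(z)\chi_1(z) + \chi_2(z)\chi_2(z) = 1$ regardless of which side $z$ is on. So on the diagonal the sharp cutoffs contribute no jump: for $z \in M_j$ one simply has $\tilde H^M(t,z,z) = H^{M_j'}(t,z,z)$, and the same shared-ball argument as above—now with shrinking but still positive radius $\rho \sim d$—shows the difference from $H^M(t,z,z)$ is $\calO(t^\infty)$, with the Gaussian factor $\exp(-d^2/(8t))$ degrading only polynomially as $d \to 0$ relative to $t$ and remaining $\calO(t^\infty)$ once one argues uniformly. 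I expect the main obstacle to be precisely this uniformity as $z$ approaches $\Sigma$: one must check that the implied constants in the $\calO(t^\infty)$ estimate do not blow up as $d \searrow 0$, which requires a careful two-parameter bookkeeping of $t$ against $d$ (for instance, splitting into $d^2 \geq t$ and $d^2 < t$, and in the latter range using the crude bound that the difference is $\calO(t^{-1})$ against a volume factor to absorb it). The off-diagonal sharp cutoffs, which would genuinely be discontinuous, never enter because $K$ is evaluated only on the diagonal; this is exactly why the nonstandard parametrix with characteristic functions is admissible here. Assembling the two regimes gives $K(t,z) = \calO(t^\infty)$ as $t \searrow 0$, as claimed.
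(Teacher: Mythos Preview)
Your approach via locality of the heat kernel and Duhamel/Gaussian bounds is the same mechanism the paper invokes, but you have misread the geometric hypothesis and thereby created a spurious difficulty.  Recall the setup: $M_j'$ is a complete manifold containing $M_j$ with the property that $M_j' \cap \calU = M \cap \calU$ for some fixed neighbourhood $\calU$ of $\Sigma$.  This means $M_j'$ agrees with $M$ not only on $M_j$ but on a collar \emph{beyond} $\Sigma$ as well.  Consequently, for every $z \in M_j$, including points arbitrarily close to $\Sigma$, there is a ball $B_\rho(z)$ of \emph{fixed} radius $\rho > 0$ (depending only on $\calU$) which is isometrically common to $M$ and $M_j'$.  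Your shared-ball argument then gives $H^{M_j'}(t,z,z) - H^M(t,z,z) = \calO(t^\infty)$ uniformly in $z \in M_j$, with no need to let $\rho$ shrink with $d = \dist(z,\Sigma)$.

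Once you see this, the ``remaining regime'' you worry about disappears: there is no two-parameter bookkeeping, no splitting into $d^2 \gtrless t$, and no need to absorb anything against a volume factor.  Incidentally, the fix you sketch there would at best give an $L^1$ estimate on $K$, not the pointwise $\calO(t^\infty)$ statement the lemma asserts.  The paper's proof is correspondingly brief: it simply writes
\[
\tilde{H}^M(t,z,z) - H^M(t,z,z) = \chi_1(z)\bigl(H^{M_1'}(t,z,z) - H^M(t,z,z)\bigr) + \chi_2(z)\bigl(H^{M_2'}(t,z,z) - H^M(t,z,z)\bigr),
\]
and observes that each parenthesis is $\calO(t^\infty)$ on the support of the corresponding $\chi_j$ because $M_j'$ agrees with $M$ in a neighbourhood of $M_j$ and short-time heat asymptotics are local.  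Your algebraic reduction to $\tilde H^M(t,z,z) = H^{M_j'}(t,z,z)$ for $z \in M_j$ is exactly this rewriting; you only need to supply the correct radius for the shared ball.
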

\begin{proof}
Rewrite
\begin{multline*}
\tilde{H}^M(t,z,z) = \chi_1(z)\left(H^{M_1'}(t,z,z) - H^M(t,z,z)\right)  \\ 
+ \chi_2(z)\left( H^{M_2'}(t,z,z) - H^M(t,z,z)\right) + H^M(t,z,z).
\end{multline*}
By assumption, $M_j'$ agrees with $M$ in a neighbourhood of $M_j$, so
that $H^{M_j'}(t,z,z) - H^M(t,z,z) = \calO(t^\infty)$ on the support of $\chi_j$
(remember that the small $t$ expansions of these operators are local), and
this proves the claim. 
\end{proof}


\section{Proofs of main theorems}
We have now assembled all the requisite facts and can proceed with the proofs of
the main theorems. 

As in the introduction, let $G(t,\e) = \Tr H^{\Omega_\e}$. If $\beta: Q_0 \to Q$ is the blowdown 
map, then let $\calG = \beta^* G$. We need to analyze the behaviour of $\calG$ near each of the 
faces and corners of $Q_0$, and for that we shall use the coordinates $(\tau,\e)$ introduced 
in \S 2.2. 

We shall make a simplifying assumption about the geometry in order to elucidate the proof.  For
each $i$, let $S_{\alpha_i}$ denote the sector in $\RR^2$ with opening angle $\alpha_i$. Choose
a smoothly bounded region $Z_i$ in the plane which coincides with $S_{\alpha_i}$ outside $B_{1/2}(0)$,  
and let $Z_i^{\e} = B_{1/\e}(0) \cap Z_i$. Then we assume that near each vertex $p_i$, the restriction
of the metric $g_\e$ to $B_1(p_i) \cap \Omega_\e$ is isometric to the dilation by the factor $\e$
of the region $Z_i^\e$, which obviously lies in the unit ball. The result remains true in the generality 
with which it was stated earlier, 
but the proof requires a few more technical steps which are both standard and not particularly 
germane to the main ideas here. Furthermore, for notational convenience only, we assume that there 
is only a single vertex $p$ and denote the corresponding smooth model region and sector by $Z$ 
and $S$, respectively. 

\medskip

\noindent{\em Proof of Theorem \ref{th:blowup}:} 
We first construct a particular family of parametrices for the heat kernel on $\Omega_\e$.
For any $0 \leq \e < \e_0$, decompose
\[
\Omega_\e = \Omega_{\e,1} \cup \Omega'
\]
where $\Omega_{\e,1} = \Omega_\e \cap B_1(p),$ and $\Omega' = \Omega_\e \setminus (\Omega_\e 
\cap B_1(p))$.  Note that $\Omega'$ is independent of $\e$. 
Lemma \ref{le:decomp1} shows that
\begin{equation}
H^{\Omega_\e}(t,z,z)  = \chi_1(z) H^{\e Z}(t,z,z) + \chi_2(z) H^{\Omega_0}(t,z,z) +  K(t,z),
\label{eq:hkoe}
\end{equation}
where $\chi_1$ is the characteristic function of $|z| \leq 1,$ $\chi_2 = 1-\chi_1$,
and $K$ is the error term from Lemma \ref{le:decomp1}, hence 
\[
G(t,\e) = \int_{|z| \leq 1} H^{\e Z}(t,z,z)\, dz + \int_{\Omega'} H^{\Omega_0}(t,z,z)\, dz 
+ \int_{\Omega_\e} K(t,z,z)\, dz.
\]
We denote the sum on the right side by $\mbox{I} + \mbox{II} + \mbox{III}$, and analyze the lifts of 
these terms successively.

By Proposition \ref{pr:dilate}, $H^{\e Z}(t,z,z') = \e^{-2} H^Z(t/\e^2, z/\e, z'/\e)$, so setting $z = z' = \e w$, 
we see that
\[
\beta^*\mbox{I} = \int_{|w| \leq 1/\e} H^Z(\tau, w, w)\, dw.
\]
This will be the principal term, and we defer its analysis for the moment.

Next, $\mbox{II}$ is independent of $\e$, and it is polyhomogeneous as $t \searrow 0$,
with expansion given by integrating the standard heat coefficients $a_j(z)$
over this restricted domain. Hence its lift to $Q_0$ is clearly polyhomogeneous.

Finally, by Lemma \ref{le:decomp1}, $\mbox{III}$ depends on $\e$ but decays rapidly 
in $t$ uniformly in $\e$. 

We now examine $\beta^*\mbox{I}$ more closely. Choose a smoothly bounded compact region $W$ which agrees
with $Z$ in $|w| \leq 2$, so that $Z = (W \cap B_1) \cup (S \setminus B_1)$. 
Using Lemma \ref{le:decomp1} again, write 
\begin{equation}
H^Z(t,z,z) = \chi_1(z) H^{W}(t,z,z) + \chi_2(z) H^S(t,z,z) + K_1(t,z)
\label{eq:hkZ}
\end{equation}
where $K_1$ is the corresponding error term. Then
\begin{multline*}
\beta^* \mbox{I} = \int_{|w| \leq 1} H^W (\tau,w,w)\, dw + \int_{1 \leq |w| \leq 1/\e} H^S(\tau,w,w)\, dw \\
+ \int_{|w| \leq 1/\e} K_1(\tau,w,w)\, dw,
\end{multline*}
which we write as $\mbox{I}' + \mbox{II}' + \mbox{III}'$.  

We first prove polyhomogeneity of these terms away from the right face R of $Q_0$.
The term $\mbox{I}'$ has an expansion as $\tau \searrow 0$ and is independent of $\e$,  
so $\beta^*\mbox{I}'$ is certainly polyhomogeneous in this region. By Lemma \ref{le:decomp1} 
again, $K_1$ decreases rapidly as $\tau \to 0$, so this term is also polyhomogeneous there.
Note too that by the explicit form of the error term in the proof of that lemma, and using
the dilation properties of $H^Z$ and $H^S$ again, $K_1(\tau,z) = \calO(|z|^{-\infty})$ uniformly
for $\tau$ in any bounded set, so its integral over $|z| \leq 1/\e$ is also bounded independently
of $\e$. 

To analyze the remaining term, set
\[
D(R) := \int_{|w| \leq R} H^S(1, w,w)\, dw. 
\]
By Proposition \ref{pr:dilate}, $\mbox{II}'(\e,\tau) = D(1/\e \sqrt{\tau}) - D(1/\sqrt{\tau})$,
so it will suffice to show that $D$ has an expansion in powers of $1/R$ as $R \to \infty$. 
For this, we appeal to a calculation by van den Berg and Srisatkunarajah \cite{BS}, who prove that
\begin{equation}
D(R) = \frac{\alpha R^2}{8 \pi }  - \frac{R^2}{2\pi } \int_0^1 e^{-R^2 y^2} \sqrt{1-y^2}\, dy + 
\frac{\pi^2 - \alpha^2}{24 \pi \alpha} + \calO(e^{-cR^2})
\label{eq:BS}
\end{equation}
for some $c > 0$ independent of $R$. Only the polyhomogeneous structure of the second term on the right 
is not completely obvious.  For that, we may as well replace the upper limit of integration by $1/2$ since the 
integral from $1/2$ to $1$ decreases exponentially in $R$.  Using the Taylor series for $\sqrt{1-y^2}$ 
at $y=0$, we find that
\[
\frac{R^2}{2\pi} \int_0^{1/2} e^{-R^2 y^2} (1 - \frac12 y^2 - \frac14 y^4 - \ldots)\, dy
\sim \frac{R}{4\sqrt{\pi}} - \frac{1}{16\sqrt{\pi}R}  + \calO(R^{-3}),
\]
and this completes the proof of polyhomogeneity of $\beta^*\mbox{I}$ for $\tau$ in any bounded set.

To finish the proof, we must analyze the behaviour of $\beta^*\mbox{I}$ as $\tau \nearrow \infty$. Switch
to the coordinates $t,\eta$, so $\e = \eta \sqrt{t}$ and $\tau = \eta^{-2}$. 
It is now more convenient to use the standard representation of the heat kernel in terms of the resolvent:
\begin{equation}
H^Z = \int_\Gamma e^{-\tau\lambda} R_Z(\lambda)\, d\lambda, \qquad \mbox{where}\quad
R_Z(\lambda) = (-\Delta_Z - \lambda)^{-1}.
\label{eq:heatres}
\end{equation}
Here $\Gamma$ is a path surrounding the spectrum of $\Delta_Z$, for example, the two half-lines
$\mbox{Im}\, \lambda = \pm(\alpha \mbox{Re}\, \lambda + \beta)$, $\alpha, \beta > 0$, 
joined by the half-circle $|\lambda| = \beta$, $\mbox{Re}\,\lambda \leq 0$, traversed in the 
counterclockwise direction. 
In general it is a subtle matter to deduce the fact that $H^Z$ has an expansion in powers of $1/\tau$
at large times since this depends on the fine structure of the resolvent near the threshold $\lambda = 0$.
However, in this case we already have sufficient information about the heat kernel on $S$ that this
is not hard. Choose a partition of unity $\{\psi_1, \psi_2\}$ on $Z$ such that $\psi_1 = 1$ in
$|z| \leq 3/4$ and $\psi_2 = 1$ in $|z| \geq 5/4$, and that both $Z \cap W$ and $Z \cap S$
contain the region $Z \cap \{3/4 \leq |z| \leq 5/4\}$. Choose other cutoff functions $\tilde{\psi}_j$
such that $\tilde{\psi}_j = 1$ on $\mbox{supp}\,\psi_j$. Let $R_W$ and $R_S$ denote the resolvents for
$\Delta_W$ and $\Delta_S$, with Dirichlet boundary conditions, and define the parametrix 
\[
\tilde{R}_Z(\lambda) = \tilde{\psi}_1 R_W(\lambda) \psi_1 + \tilde{\psi}_2 R_S(\lambda) \psi_2.
\]
Then
\[
(\Delta_Z - \lambda) \tilde{R}_Z(\lambda) = I + [\Delta_Z,\tilde{\psi}_1]R_W(\lambda) \psi_1 + 
[\Delta_Z, \tilde{\psi}_2] R_S(\lambda) \psi_2 :=  I + E(\lambda).
\]
Since the singular supports of both $R_W$ and $R_S$ are on the diagonal, and the support of 
$[\Delta,\tilde{\psi}_j]$ is disjoint from that of $\psi_j$, we see that $E(\lambda)$ is a holomorphic
family of operators (for $\lambda \in {\mathbb C} \setminus \RR^+$) which maps $L^2(Z)$ into 
$\calC^\infty_0(Z)$. Also, since $-\Delta_Z - \lambda$ is invertible for $\lambda$ in this region, 
$I + E(\lambda)$ is also invertible there. We write its inverse as $I + F(\lambda)$, so that
\begin{equation}
R_Z(\lambda) = \tilde{R}_Z(\lambda) + \tilde{R}_Z(\lambda)F(\lambda).
\label{eq:resparam}
\end{equation}
The relationships $(I + E(\lambda))(I + F(\lambda)) = (I + F(\lambda)) (I + E(\lambda)) = I$ imply that 
\[
F(\lambda) = -E(\lambda) + E^2(\lambda) + E(\lambda) F(\lambda) E(\lambda),
\]
hence $F(\lambda)$ is also smoothing and maps $L^2(Z)$ into $\calC^\infty_0(Z)$; the second term on the right 
in (\ref{eq:resparam}) has the same mapping properties.  

Finally, the form of the expansion of $F(\lambda)$ for $\lambda$ near $0$ (away from the positive real axis) 
is the precisely the same as that of $E(\lambda)$, which in turn is the same as that of $\tilde{R}_Z(\lambda)$,
and hence finally as that of $R_S(\lambda)$. 

The decay of each of term as $|\lambda| \to \infty$ is straightforward, so we can write
\begin{multline*}
\int_{|z| \leq 1/\eta \sqrt{t}} H^Z(\eta^{-2},z,z)\, dz = 
\int_{|z| \leq 1/\eta \sqrt{t}} \left(\int_\Gamma e^{-\lambda/\eta^2} R_Z(\lambda)\, d\lambda\right) (z,z)\, dz \\
= \int_{|z| \leq 2} \left(\int_\Gamma e^{-\lambda/\eta^2} \psi_1 R_W(\lambda)\, d\lambda\right)(z,z)\, dz  \\ 
+ \int_{1/2 \leq |z| \leq 1/\eta \sqrt{t}}\left(\int_\Gamma e^{-\lambda/\eta^2} \psi_2 R_S(\lambda)\, d\lambda\right)(z,z)\, dz \\ +
\int_{|z| \leq 1/\eta \sqrt{t}} \left(\int_{\Gamma} e^{-\lambda/\eta^2} \tilde{R}_Z(\lambda)F(\lambda)\, d\lambda\right) (z,z)\, dz.
\end{multline*}
The inner integrand in the first term on the right extends holomorphically to a neighbourhood of $\lambda = 0$, 
so the contour can be moved to lie entirely in the right half-plane, which shows that this term decreases
exponentially in $1/\eta$. The second term is polyhomogeneous by the explicit analysis of the function $D(R)$
above. The fact that the final term has an expansion follows from the existence of asymptotics of
$F(\lambda)$ for $\lambda$ near $0$. This completes the proof of the polyhomogeneity of $\calG$ on $Q_0$. 
\hfill $\Box$

\medskip

\noindent{\em Proof of Theorem \eqref{th:fp}}
This consists of examining the terms in the expansion of $\calG$ at the various boundary faces.

First, at L, away from F we may use the variables $(t,\e)$, and 
\[
G(t,\e) \sim \sum_{j=0}^\infty a_j(\e) t^{-1 + j/2}.
\]
Near L $\cap$ F, we substitute $t = \e^2 \tau$ to get 
\begin{equation} \label{eq:coeff}
\calG(\tau,\e) \sim \sum_{j=0}^\infty a_j(\e) \tau^{-1 + j/2} \e^{-2 + j}.
\end{equation}
The coefficients $a_j(\e)$ are polyhomogeneous as $\e \to 0$ by Theorem \ref{th:blowup}.   

At R, away from $t=0$, 
\[
G(t,\e) \sim \sum_{j=0}^\infty B_j(t) \e^j;
\]
here $B_0(t) = \Tr H^{\Omega_0}$. Near F $\cap$ R we use the coordinates $t$ and $\eta = \e/\sqrt{t}$ to compute 
\[
\calG(t,\eta) \sim \sum_{j=0}^\infty B_j(t) \eta^j t^{j/2}.
\]
Again, the coefficients $B_j(t)$ are polyhomogeneous in $t$. 

Finally, near F, we use the coordinates $(\tau,\e)$, so the expansion is in powers of $\e,$ and by (\ref{eq:coeff}) it is 
\[
\calG(\tau,\e) \sim \sum_{j=0}^\infty C_j(\tau) \e^{-2 + j}.
\]
We shall identify the coefficients $C_0$, $C_1$ and $C_2$. 

By our analysis of the terms $\mbox{I}'$, $\mbox{II}'$, $\mbox{III}'$, $\mbox{II}$ and $\mbox{III}$, 
we see that only $\mbox{II}'$ and $\mbox{II}$ contribute to the coefficients of $\e^{-2}$ and $\e^{-1}$.
Substituting directly from the expansions of these two terms (using the McKean-Singer asymptotics on
$\Omega'$ for $\mbox{II}$ and the first terms in the expansion of $D(1/\e \sqrt{\tau})$ for $\mbox{II}'$),
and then using the definition of the finite part at $\e = 0$ of $\mbox{I}$, we have
\begin{multline*}
\calG(\tau,\e) \sim \frac{1}{\e^{2}\tau}\left(\frac{|\Omega'|}{4\pi} + \frac{\alpha}{8\pi}\right) 
- \frac{1}{\e \tau^{1/2}}\left(\frac{|\partial \Omega'|}{8\sqrt{\pi}} + \frac{1}{4\sqrt{\pi}}\right)  \\
+ \frac{1}{12 \pi} \left(\int_{\Omega'} K\, dA + \int_{\del \Omega'} \kappa\, ds \right)  + 
\underset{{\e=0}}{\mathrm{f.p.}} \int_Z H^Z(\tau,w,w)\, dw+ \calO(\e).
\end{multline*}
In other words,   
$$C_0 (\tau) = \frac{1}{\tau} \left( \frac{|\Omega'|}{4 \pi} + \frac{\alpha}{8 \pi} \right),$$
$$C_1 (\tau) = - \frac{1}{\sqrt{\tau}} \left( \frac{|\partial \Omega'|}{8 \sqrt{\pi}} + \frac{1}{4 \sqrt{\pi}} \right),$$
and
$$C_2 (\tau) = \frac{1}{12 \pi} \left( \int_{\Omega'} K dA + \int_{\partial \Omega'} \kappa ds \right) + \underset{{\e=0}}{\mathrm{f.p.}} \int_Z H^Z(\tau,w,w)\, dw.$$
 
This simplifies using the following observations: first, the area of a circular sector of opening $\alpha$ and radius
$1$, i.e.\ $|\Omega_0 \cap B_1|$, equals $\alpha/2$, so the coefficient of $\e^{-2}\tau^{-1}$ is just $|\Omega_0|/4\pi$; 
similarly, the sides of this circular sector are straight lines, so $|\del \Omega_0 \cap B_1| = 2$, which means
that the next coefficient is $-|\del \Omega_0|/8\sqrt{\pi}$; finally, since $g_0$ is flat in $\Omega_0 \cap B_1$,
$K \equiv 0$ there, so using that the contribution from `turning the corner' at $p$ in the boundary integral is $\pi-\alpha$,
we find that
\[
\int_{\Omega'} K\, dA + \int_{\del \Omega'} \kappa \, ds = 2\pi \chi(\Omega_0) - (\pi-\alpha). 
\]
This means that 
\begin{equation}
C_2(\tau) = \underset{{\e=0}}{\mathrm{f.p.}} \int_Z H^Z(\tau,w,w)\, dw + \frac{1}{6}\chi(\Omega_0)-\frac{\pi-\alpha}{12\pi}.
\label{eq:expC2}
\end{equation} 

We conclude by calculating its behaviour for small and large $\tau$.  Using the small $\tau$ asymptotics,
we see that
\begin{multline*}
\int_{|w| < 1/\e} H^Z(\tau,w,w)\, dw \sim \frac{|Z \cap B_{1/\e}|}{4\pi} \tau^{-1}  \\ 
- \frac{|\del Z \cap B_{1/\e}|}{8\sqrt{\pi}}\tau^{-1/2} + \frac{1}{12 \pi}\int_{\del Z} \kappa \, ds + \calO(\e \tau^{1/2});
\end{multline*}
hence the finite part of this integral is equal (up to the factor $12\pi$) to the integral of curvature on the 
boundary of $Z$, which is the total turning angle $\pi-\alpha$, so finally, the limit of $C_2$ as $\tau \to 0$
is $\chi(\Omega_0)/6$, as claimed.

Finally, we use the dilation one more time to calculate that 
\[
\int_{|w| \leq 1/\e} H^Z(\tau,w,w)\, dw = \int_{|w| \leq 1/\e \sqrt{\tau}} H^{Z/\sqrt{\tau}}(1,w,w)\, dw.
\]
Noting that $\e \sqrt{\tau} = \sqrt{t}$, and since $Z/\sqrt{\tau}$ converges to the sector $S$ as $\tau \to \infty$, 
we can use the expansion (\ref{eq:BS}) to see that the finite part is indeed $(\pi^2 - \alpha^2)/24 \pi \alpha$.  
Therefore, in general, with an arbitrary number of vertices,
\[
\lim_{\tau \to \infty} C_2(\tau) = \frac{\chi(\Omega_0)}{6} + \sum_{j=1}^k \frac{\pi^2 - \alpha_j^2}{24\pi \alpha_j}
- \frac{1}{12\pi} \sum_{j=1}^k (\pi-\alpha_j);
\]
in particular, if $\Omega_0$ is a polygon, its Euler characteristic is $1$, so the first and third terms cancel. 

This completes the proof. \qed

\section{Neumann boundary conditions}
We now briefly discuss the minor modifications needed to prove the analogues of Theorems \ref{th:blowup} and \ref{th:fp}
if Neumann conditions are used instead of Dirichlet conditions. 

A cursory inspection of the proof shows that the only real issue is to find an analogue of the van den Berg-Srisatkunarajah 
formula \eqref{eq:BS} in this setting.  This does not seem to appear explicitly in the literature, but fortunately, a recent paper 
by Kokotov \cite{Kok} contains the corresponding formula for the complete cone $C_{2\alpha}$ of angle $2\alpha$.
Let $H^C$ denote the heat kernel on this cone. Then by Proposition 1 in \cite{Kok}, there exists $c > 0$ such that 
for every $R > 0$, 
\begin{equation}
\int_{|z| \leq R} H^{C}(1,z,z)\, dz = \frac{\alpha R^2}{4\pi} + \frac{1}{12}\left(\frac{4\pi^2 - (2\alpha)^2}{2\pi (2\alpha)}\right)
+ \calO(e^{-cR^2}).
\label{eq:Kok}
\end{equation}
This formula is stated in \cite{Kok} for fixed radius $R$ and for the heat kernel at time $t$ as $t \to 0$, but because of the usual
scaling properties, it holds equally well for fixed $t$, say $t=1$, and as the radius $R \to \infty$; indeed, the quantity on the left 
depends only on the ratio $R/t^2$.  The coefficients in this expansion have been written in a nonreduced form in order to 
emphasize the dependence on the angle $2\alpha$. 

We now observe that the cone $C_{2\alpha}$ is the union of two copies of the sector $S_\alpha$ with the boundary rays
identified. Alternately, let $\tau$ be the obvious reflection on the cone $C_{2\alpha}$; then a region isometric to the sector 
$S_\alpha$ is a fundamental domain for this action and its image $\tau(S_\alpha)$ is the other half of the cone.  
In any case, using this, the formula for the Neumann heat kernel follows directly from \eqref{eq:BS} and \eqref{eq:Kok}. 
Indeed, let $L^2(C_{2\alpha}) = L^2_+ \oplus L^2_-$ be the decomposition into functions which are even and odd with respect 
to $\tau$. If $u \in H^2(C_{2\alpha}) \cap L^2_+$, then $u$ has vanishing normal derivative at $\del S_\alpha$, while if 
$u \in H^1(C_{2\alpha}) \cap L^2_-$ then $u$ vanishes at $\del S_\alpha$. Since the Laplacian commutes with $\tau$, 
the heat kernel has a $2$-by-$2$ block decomposition: the upper left and lower right on-diagonal blocks are canonically 
identified with the Neumann and Dirichlet heat kernels of $S_\alpha$, and we denote these by $H^{S}_{\mathrm{N}}$ and 
$H^{S}_{\mathrm{D}}$, respectively. Therefore, 
\[
\int_{|z| \leq R} \left(H^{S}_{\mathrm{D}}(1,z,z) + H^{S}_{\mathrm{N}}(1,z,z)\right)\, dz =
\int_{|z| \leq R} H^{C}(1,z,z)\, dz,
\]
whence
\begin{equation}
\int_{|z| \leq R} H^{S}_{\mathrm{N}}(1,z,z) \, dz \sim \frac{\alpha R^2}{4\pi} + \frac{R}{4\sqrt{\pi}} + 
\frac{\pi^2 - \alpha^2}{24 \pi \alpha} + \ldots
\label{eq:Neu}
\end{equation}
In other words, in this asymptotic formula, only the signs of the odd powers of $R$ are reversed from those in the corresponding
formula for the Dirichlet heat kernel.

It is now a simple matter to track through the various arguments in this paper to obtain that $\calG_{\mathrm{N}}(\tau,\e)$,
the pullback to $Q_0$ of the trace of the heat kernel for the Laplacian with Neumann boundary conditions on $\Omega_\e$,
is polyhomogeneous and has the expansion
\begin{multline*}
\calG_{\mathrm{N}}(\tau,\e) \sim \frac{1}{\e^2 \tau} \frac{|\Omega_0|}{4\pi} + \frac{1}{\e \tau^{1/2}}
\frac{|\del \Omega_0|}{8\sqrt{\pi}} +  \\
\frac{1}{12 \pi} \left(\int_{\Omega'} K\, dA + \int_{\del \Omega'} \kappa\, ds \right)  + 
\underset{{\e=0}}{\mathrm{f.p.}} \int_{|w| < 1/\e} H^Z_{\mathrm{N}}(\tau,w,w)\, dw+ \calO(\e).
\end{multline*}
In particular, the coefficient $C_2(\tau)$ of $\e^0$ is exactly the same as in the Dirichlet case. 
We leave the straightforward details to the reader. 

\section{Higher dimensions and other generalizations}
We have focused in this paper on two-dimensional domains in order to emphasize the simplicity of the arguments
and to take advantage of the explicit nature of the various formul\ae.  There are various analogues of these
results in higher dimensions, which we now describe briefly. These generalizations should have some interesting
applications, which will be developed elsewhere.  

The simplest generalization is to consider a family of Riemannian metrics $g_\e$ on a compact manifold $M$ such
that $(M,g_\e)$ degenerates to a space $(M_0,g_0)$ which has isolated conic singularities. We assume that this degeneration
is modelled on the rescalings of a complete asymptotically conic space $(Z,g_Z)$, i.e.\ such that suitable neighbourhoods
of $(M,g_\e)$ are (asymptotically equivalent to) rescalings of truncations of $(Z,g_Z)$. The behaviour of the entire heat kernel 
for this type of degeneration was studied in great detail in \cite{R0}, \cite{R1}. The analysis in those papers is much
more general, but considerably more intricate, than what is done here, but one consequence of those results is the
fact that the trace of the heat kernel for $\Delta_{g_\e}$ lifts to $Q_0$ to be polyhomogeneous, and the terms in its
expansions at the various faces can be determined explicitly. In particular, the coefficient $C_{n/2}(\tau)$ of $\e^0$ at the front
face of $Q_0$ is equal to the sum
\[
\int_{M_0} q_{n/2}\, dV + \underset{{\e=0}}{\mathrm{f.p.}} \int_{Z_\e} H^Z(\tau,w,w)\, dw,
\]
where $q_{n/2}$ is the standard heat invariant integrand for the metric $g_0$. It is unlikely, however, 
that one could find an explicit formula for the limit as $\tau \to \infty$ of this regularized trace, except in special cases. 

There should be a similar generalization of these ideas to the setting of resolution blowups of iterated edge spaces 
(or smoothly stratified spaces), introduced in \cite{evian}. This is likely to present a greater challenge, but one of 
the motivations for exploring the rescaling methods used here is to find way to circumvent the machinery of 
\cite{R0}, \cite{R1} if one is only interested in heat traces rather than more extensive information about the heat kernel. 

A special and very interesting case would be to find an analogue of Theorem \ref{th:fp} for smoothings of Euclidean polyhedra 
in arbitrary dimension.  The description of a family of `self-similar' smoothings of an arbitrary polyhedron is not difficult 
and follows the scheme presented in \cite{evian} closely. However, in order to make this formula explicit, one would need
an analogue of \eqref{eq:BS} or \eqref{eq:Kok} for higher dimensional polyhedral sectors, which does not seem to
be available. (Analogous results {\it are} known for other spectral invariants, however, see \cite{Fe} and \cite{Ch}.)

\end{document}